 \newtheorem{thm}{Theorem}[section]
 \newtheorem{lem}[thm]{Lemma}%[section]
\theoremstyle{definition}
 \newtheorem{exm}[thm]{Example}%[section]
 \newtheorem{dfn}[thm]{Definition}%[section]
 \newtheorem{rem}[thm]{Remark}%[section] 
 \numberwithin{equation}{section}
\theoremstyle{definition}
\theoremstyle{remark}
 \numberwithin{equation}{section}
\renewcommand{\ge}{\geqslant}\renewcommand{\geq}{\geqslant}
\newcommand{\bbC}{\mathbb{C}}
\newcommand{\bbF}{\mathbb{F}}
\newcommand{\bbQ}{\mathbb{Q}}
\renewcommand{\and}{\quad \mbox{and} \quad}  %% "and" (text in equation)
\renewcommand{\ge}{\geqslant}\renewcommand{\geq}{\geqslant}
\title{Langlands' Lambda function for quadratic tamely ramified extensions}
\subjclass[2010]{11S37; 22E50\\Keywords: Local fields, Local constants, Classical Gauss sums, Lambda functions}
\author[Biswas]{\bfseries Sazzad Ali Biswas}
\address{
Chennai Mathematical Institute\\ % \hfill (Received 00 00 2010)\\
H1, Sipcot It Park, Siruseri  \\ %\hfill (Revised  00 00 2010)\\
Kelambakkam, 603103\\
India}
\email{sabiswas@cmi.ac.in, sazzad.jumath@gmail.com}
\thanks{The  author is partially supported by Infosys Foundation, India}
\begin{document}

\vspace{18mm}
\setcounter{page}{1}
\thispagestyle{empty}

\begin{abstract}
 Let $K/F$ be a quadratic tamely ramified extension of a non-Archimedean local field $F$ of characteristic zero.
In this paper, 
we give an explicit formula for Langlands' lambda function $\lambda_{K/F}$. 
 
\end{abstract}

\maketitle

\section{\textbf{Introduction}}

Let $K/F$ be a finite subextension (but need not be Galois) in $\overline{F}/F$, where $\overline{F}$ is an algebraic
closure of a non-Archimedean
 local field $F$ of characteristic zero. Let $\psi$ be a nontrivial additive character of $F$. Then Langlands's lambda 
 function (or simply $\lambda$-function) (cf. \cite{RL}) of the extension $K/F$ is:
$$\lambda_{K/F}(\psi):=W(\text{Ind}_{G_K}^{G_F}(1_K),\psi),$$
where $1_K$ is the trivial representation of $G_K:=\text{Gal}(\overline{F}/K)$. Here $W$ denotes for local constant (or
epsilon factor) (cf. \cite{JT1}). We also can define the $\lambda$-function
via Deligne's constant $c(\rho):=\frac{W(\rho)}{W(\det(\rho))}$, where $\rho$ is a finite dimensional 
representation of $G_F$ and 
$\det(\rho)$ is the determinant of $\rho$.

Langlands has shown (cf. Theorem 1 on p. 105 of \cite{JT1}) that the local constants are {\bf weakly} extendible functions.
Therefore, to compute the local constant of any induced local Galois representation, we have to compute 
the $\lambda$-function explicitly.
Since the local Langlands correspondence preserves local constants, the explicit computation of local constants is an 
important part of the Langlands program. In the automorphic side of the local Langlands correspondence, we have local converse
theorem (cf. \cite{JWC}, \cite{GH}), but so far we do not have any such converse theorem in the 
Galois side {\bf because} explicit computation of $\lambda$-functions (hence epsilon factors of local Galois representations)
are 
not known. In the previous paper \cite{SABNT} the author gave an explicit computation of the lambda function for any 
tamely ramified Galois extension assuming the
computation of the lambda function for a tamely ramified quadratic extension.  In this paper we provide a formula for
the tamely ramified quadratic case, thus completing the work in \cite{SABNT}.

We should mention that in \cite{TS} Saito has computed the $\lambda$-function for an arbitrary extension 
assuming the residual characteristic
of the base field is not equal to 2 (cf. Theorem on p. 508 of \cite{TS}). In Theorem II 2B on p. 508
of \cite{TS}, when ramification
index is {\bf even}, Saito has computed the lambda functions for even degree extensions via 
the {\bf Legendre symbol} and {\bf Hilbert symbol}.

In this paper, we also compute this $\lambda$-functions for quadratic tamely ramified
extensions. In our computation, we use the classical quadratic
{\bf Gauss sums} and
these computations are different from the Saito's result and explicit.
The main idea for tamely ramified quadratic extension case is to reduce the $\lambda$-functions computation
to the classical quadratic Gauss sums computations.

%And when we are in the wild case, 
%we use the following formula via Deligne's constant
%$$\lambda_{K/F}(\psi_F)=c(\text{Ind}_{K/F}(1_K))\cdot W(\Delta_{K/F},\psi_F),$$
%where $\Delta_{K/F}=\det(\text{Ind}_{K/F}(1_K))$
%and then use Bruno Kahn's
%results (cf. Theorem 1 on p. 313 of \cite{BK}) for computing the Deligne's constant.

In this paper, as mentioned above, we compute lambda functions for quadratic tamely ramified extensions explicitly. 
This, together with the work in \cite{SABNT},
yields an explicit computation of lambda functions for all tamely ramified extensions.

We now state the main theorem of this paper.

%If $K/F$ is a tamely ramified quadratic extension,
%by using classical Gauss sum we have the 
%following explicit formula for $\lambda_{K/F}$.

\begin{thm}\label{Theorem 3.21}
 Let $K$ be a tamely ramified quadratic extension of $F/\bbQ_p$ with $q_F=p^s$. Let $\psi_F$ be the canonical additive 
 character of $F$.
 Let $c\in F^\times$ with $-1=\nu_F(c)+d_{F/\bbQ_p}$, and $c'=\frac{c}{\text{Tr}_{F/F_0}(pc)}$, where $F_0/\bbQ_p$ is the 
 maximal unramified
 extension in $F/\bbQ_p$. Let $\psi_{-1}=c'\cdot\psi_F$, then
 \begin{equation*}
  \lambda_{K/F}(\psi_F)=\Delta_{K/F}(c')\cdot\lambda_{K/F}(\psi_{-1}),
 \end{equation*}
where 
 \begin{equation*}
 \lambda_{K/F}(\psi_{-1})=\begin{cases}
                                               (-1)^{s-1} & \text{if $p\equiv 1\pmod{4}$}\\
                                                  (-1)^{s-1}i^{s} & \text{if $p\equiv 3\pmod{4}$}.
                                            \end{cases}
\end{equation*}
If we take $c=\pi_{F}^{-1-d_{F/\bbQ_p}}$, where $\pi_F$ is a norm for $K/F$, then 
\begin{equation}
 \Delta_{K/F}(c')=\begin{cases}
                   1 & \text{if $\overline{\text{Tr}_{F/F_0}(pc)}\in k_{F_0}^{\times}=k_{F}^{\times}$ is a square},\\
                   -1 & \text{if $\overline{\text{Tr}_{F/F_0}(pc)}\in k_{F_0}^{\times}=k_{F}^{\times}$ is not a square}.
                  \end{cases}
\end{equation}
Here "overline" stands for modulo $P_{F_0}$ and $c'\cdot \psi_F(x):=\psi_F(c'x)$ for all $x\in F$.
\end{thm}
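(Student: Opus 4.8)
The plan is to reduce the computation of $\lambda_{K/F}$ to that of a classical quadratic Gauss sum over the residue field $k_F=\bbF_{p^s}$, and then to evaluate that sum by combining Gauss's determination of the sign of the quadratic Gauss sum over $\bbF_p$ with the Hasse--Davenport relation. Since $K/F$ is quadratic, $\text{Ind}_{G_K}^{G_F}(1_K)\cong 1_F\oplus\chi_{K/F}$, where $\chi_{K/F}$ is the order-two character of $F^\times$ attached to $K/F$ by local class field theory, and $\Delta_{K/F}=\det\big(\text{Ind}_{G_K}^{G_F}(1_K)\big)=\chi_{K/F}$. By multiplicativity of $W$ on direct sums (the trivial summand contributing $1$), $\lambda_{K/F}(\psi)=W(\chi_{K/F},\psi)$, and the standard transformation rule $\lambda_{K/F}(c'\psi_F)=\Delta_{K/F}(c')\,\lambda_{K/F}(\psi_F)$ then yields the first displayed identity at once (using that $\Delta_{K/F}$ is valued in $\{\pm1\}$). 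It therefore remains to evaluate $W(\chi_{K/F},\psi_{-1})$ and $\Delta_{K/F}(c')=\chi_{K/F}(c')$. Note that the requirement that $\pi_F$ be a norm for $K/F$ forces $K/F$ to be \emph{ramified} (for the unramified quadratic extension no uniformizer lies in the norm group), so that $\chi_{K/F}$ is tamely ramified of conductor exponent one and $\chi_{K/F}|_{\mathcal{O}_F^\times}$ descends to the unique quadratic character $\left(\tfrac{\cdot}{k_F}\right)$ of $k_F^\times$.

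Next I pin down $\psi_{-1}$. Using $-1=\nu_F(c)+d_{F/\bbQ_p}$, the facts that $F_0/\bbQ_p$ is unramified (so $d_{F/\bbQ_p}=d_{F/F_0}$ and $\nu_{F_0}(p)=1$), and the trace-of-ideals formula $\text{Tr}_{F/F_0}(P_F^{k})=P_{F_0}^{\lfloor(k+d_{F/F_0})/e\rfloor}$, one checks that $\text{Tr}_{F/F_0}(pc)\in\mathcal{O}_{F_0}^\times$; hence $\nu_F(c')=\nu_F(c)=-1-d_{F/\bbQ_p}$ and $\psi_{-1}$ is trivial on $P_F$ but not on $\mathcal{O}_F$. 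Pairing a tamely ramified character of conductor exponent one against such a $\psi_{-1}$, the standard tame $\epsilon$-factor formula gives
\[
W(\chi_{K/F},\psi_{-1})=q_F^{-1/2}\sum_{x\in k_F^\times}\left(\tfrac{x}{k_F}\right)\overline{\psi}_{-1}(x),
\]
where $\overline{\psi}_{-1}$ is the additive character of $k_F$ induced by $\psi_{-1}|_{\mathcal{O}_F}$. The crucial step — and the place where the normalising denominator $\text{Tr}_{F/F_0}(pc)$ in the definition of $c'$ earns its keep — is to show that $\overline{\psi}_{-1}$ is exactly the canonical additive character $x\mapsto\exp\!\big(\tfrac{2\pi i}{p}\text{Tr}_{k_F/\bbF_p}(x)\big)$ of $k_F$, independently of the choice of $c$. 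For this I factor $\text{Tr}_{F/\bbQ_p}=\text{Tr}_{F_0/\bbQ_p}\circ\text{Tr}_{F/F_0}$; since $k_F=k_{F_0}$, a unit $\tilde x\in\mathcal{O}_F^\times$ lifting $x\in k_F^\times$ may be written $\tilde x=\tilde x_0+\pi_F(\cdots)$ with $\tilde x_0\in\mathcal{O}_{F_0}$, and then the trace-of-ideals formula yields $\text{Tr}_{F/F_0}(c'\tilde x)\equiv \tilde x_0/p\pmod{\mathcal{O}_{F_0}}$; applying $\text{Tr}_{F_0/\bbQ_p}$ and then $\psi_{\bbQ_p}$ (the standard character of $\bbQ_p$, so $\psi_F=\psi_{\bbQ_p}\circ\text{Tr}_{F/\bbQ_p}$), which is trivial on $\bbZ_p=\text{Tr}_{F_0/\bbQ_p}(\mathcal{O}_{F_0})$ and whose composite with $\text{Tr}_{F_0/\bbQ_p}$ reduces modulo $p$ to $x\mapsto\exp(\tfrac{2\pi i}{p}\text{Tr}_{k_{F_0}/\bbF_p}(x))$, produces precisely the canonical character.

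Hence $W(\chi_{K/F},\psi_{-1})=q_F^{-1/2}g_{p^s}$, where $g_{p^s}$ is the classical quadratic Gauss sum of $\bbF_{p^s}$ against its canonical additive character. Since the quadratic character of $\bbF_{p^s}^\times$ is the composite of that of $\bbF_p^\times$ with the norm, the Hasse--Davenport relation gives $-g_{p^s}=(-g_p)^s$; combined with Gauss's evaluation $g_p=\sqrt p$ for $p\equiv 1\pmod 4$ and $g_p=i\sqrt p$ for $p\equiv 3\pmod 4$, this yields $q_F^{-1/2}g_{p^s}=(-1)^{s-1}(g_p/\sqrt p)^s$, equal to $(-1)^{s-1}$ if $p\equiv 1\pmod 4$ and to $(-1)^{s-1}i^{s}$ if $p\equiv 3\pmod 4$, as asserted. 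For the last assertion, take $c=\pi_F^{-1-d_{F/\bbQ_p}}$ with $\pi_F$ a norm for $K/F$; then
\[
\Delta_{K/F}(c')=\chi_{K/F}(c')=\chi_{K/F}(\pi_F)^{-1-d_{F/\bbQ_p}}\cdot\chi_{K/F}\!\big(\text{Tr}_{F/F_0}(pc)\big)^{-1},
\]
the first factor being $1$ because $\pi_F$ is a norm, and the second — since $\text{Tr}_{F/F_0}(pc)$ is a unit and $\chi_{K/F}$ restricted to units is $\left(\tfrac{\cdot}{k_F}\right)$ — equal to $1$ or $-1$ according as $\overline{\text{Tr}_{F/F_0}(pc)}\in k_F^\times$ is or is not a square, which is the claimed value.

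The step I expect to be the main obstacle is the identification in the second paragraph of $\overline{\psi}_{-1}$ with the canonical additive character of $k_F$, uniformly in $c$: it requires keeping careful track of the fractional part of $\text{Tr}_{F/F_0}(c'\tilde x)$ in $P_{F_0}^{-1}/\mathcal{O}_{F_0}\cong k_{F_0}$ through the tower $F\to F_0\to\bbQ_p$ and the different $\mathfrak{D}_{F/F_0}$, and in particular verifying that $\text{Tr}_{F/F_0}(pc)$ is a \emph{unit}, not merely integral, so that $\psi_{-1}$ has the expected conductor. The other ingredients — multiplicativity of $W$, the transformation rule for $\lambda$, the tame $\epsilon$-factor formula, Gauss's sign theorem and the Hasse--Davenport relation — are classical and are recalled in the earlier sections.
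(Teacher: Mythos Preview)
Your proposal is correct and follows essentially the same route as the paper: reduce $\lambda_{K/F}(\psi)$ to $W(\chi_{K/F},\psi)$ via $\text{Ind}_{K/F}(1)=1_F\oplus\chi_{K/F}$, use the transformation rule to pass between $\psi_F$ and $\psi_{-1}$, identify $\psi_{-1}|_{\mathcal{O}_F}$ with the canonical additive character of $k_F$ (this is exactly the content of the paper's Lemmas~\ref{Lemma 3.20} and~\ref{Lemma 3.21}), and evaluate the resulting quadratic Gauss sum over $\bbF_{p^s}$. The only substantive difference is that you \emph{derive} the Gauss-sum value from Gauss's sign theorem together with the Hasse--Davenport relation, whereas the paper simply quotes the result as Theorem~\ref{Theorem 2.7} (Lidl--Niederreiter); your argument for $\Delta_{K/F}(c')$ is likewise the same as the paper's.
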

\begin{rem}\label{Remark 1.2}
 
But in general, computation of $\lambda_{K/F}$, where $K/F$ is a wildly
ramified quadratic extension, seems subtle. When $F=\bbQ_2,$ in Example 3.4.14, pp. 60-63 of \cite{SABT}, we 
have explicit computation for $\lambda_{K/\bbQ_2}$. 
In \cite{MW}, one also can find some particular cases (cf. on pp. 252-255 of \cite{MW}). 
But if $F/\bbQ_2$ is an arbitrary finite extension and $K/F$ is {\bf quadratic} extension, then computation of 
$\lambda_{K/F}$ is still {\bf open.} And if $K$ is an abelian extension with $N_{K/F}(K^\times)={F^\times}^2$, we have the 
following theorem.
\end{rem}

\begin{thm}\label{Theorem 3.26}
 Let $F$ be an extension of $\bbQ_2$. Let $K$ be the abelian extension for which $N_{K/F}(K^\times)={F^\times}^2$.
 Then $\lambda_{K/F}=1$.
\end{thm}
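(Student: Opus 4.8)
The plan is to turn $\lambda_{K/F}$ into the root number of an explicit orthogonal representation of $G_{F}$ and evaluate it using Deligne's formula for orthogonal root numbers. By local class field theory $K/F$ is abelian with $\mathrm{Gal}(K/F)\cong F^{\times}/N_{K/F}(K^{\times})=F^{\times}/(F^{\times})^{2}=:V$, and since $d:=[F:\bbQ_{2}]<\infty$ the group $V$ is an $\bbF_{2}$-vector space of dimension $d+2\geq 3$. As $K/F$ is Galois, $\mathrm{Ind}_{G_{K}}^{G_{F}}(1_{K})$ is the inflation to $G_{F}$ of the regular representation of $\mathrm{Gal}(K/F)$, hence $\mathrm{Ind}_{G_{K}}^{G_{F}}(1_{K})=\bigoplus_{\chi^{2}=1}\chi$, the sum over all characters $\chi$ of $F^{\times}$ with $\chi^{2}=1$, viewed as characters of $G_{F}$ via reciprocity. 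By additivity of the local constant in direct sums, $\lambda_{K/F}(\psi)=\prod_{\chi^{2}=1}W(\chi,\psi)$.

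Before attacking the sign I would record two easy facts. Scaling $\psi$ by $a\in F^{\times}$ multiplies $W(\chi,\psi)$ by $\chi(a)$, and $\prod_{\chi^{2}=1}\chi(a)=1$ because the number of quadratic $\chi$ with $\chi(a)=-1$ is either $0$ or $\tfrac12|V|=2^{d+1}$, which is even; so $\lambda_{K/F}$ does not depend on $\psi$, consistent with the statement. Also the local functional equation gives $W(\chi,\psi)W(\chi^{-1},\psi)=\chi(-1)$, so $W(\chi,\psi)^{2}=\chi(-1)$ for $\chi^{2}=1$, and the same parity count then yields $\lambda_{K/F}^{2}=\prod_{\chi^{2}=1}\chi(-1)=1$. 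Hence $\lambda_{K/F}\in\{\pm1\}$, and the whole content of the theorem is that the sign is $+1$.

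To fix the sign I would use that $\rho:=\mathrm{Ind}_{G_{K}}^{G_{F}}(1_{K})=\bigoplus_{\chi^{2}=1}\chi$ is an orthogonal representation, with the obvious diagonal structure, for which the total Stiefel--Whitney class is $\mathrm{sw}(\rho)=\prod_{\chi^{2}=1}\bigl(1+[\chi]\bigr)$ in $H^{*}(F,\bbZ/2)$, where $[\chi]\in H^{1}(F,\bbZ/2)$ is the class of $\chi$. Since $\chi\mapsto[\chi]$ is a bijection onto $H^{1}(F,\bbZ/2)\cong V$, the degree-$1$ part is $\mathrm{sw}_{1}(\rho)=\sum_{v\in V}v=0$ (as $\dim V\geq 2$), i.e. $\det\rho$ is trivial, while $\mathrm{sw}_{2}(\rho)=\sum_{\{v,v'\}\subseteq V,\ v\neq v'}v\cup v'$. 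By Deligne's theorem on root numbers of orthogonal representations (valid at residue characteristic $2$ as well), $W(\rho,\psi)=W(\det\rho,\psi)\cdot\langle\mathrm{sw}_{2}(\rho)\rangle$, where $\langle\,\cdot\,\rangle\colon H^{2}(F,\bbZ/2)\cong\{\pm1\}$ is the invariant isomorphism; since $\det\rho$ is trivial this reads $\lambda_{K/F}=\langle\mathrm{sw}_{2}(\rho)\rangle$. So everything reduces to showing $\sum_{\{v,v'\}\subseteq V,\ v\neq v'}v\cup v'=0$ in $H^{2}(F,\bbZ/2)$ whenever $n:=\dim_{\bbF_{2}}V\geq 3$.

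This last point is the combinatorial heart, and where I expect most of the care to be needed. Fixing an $\bbF_{2}$-basis $v_{1},\dots,v_{n}$ of $V$ and writing the general element as $v_{S}=\sum_{i\in S}v_{i}$, one expands $v_{S}\cup v_{T}=\sum_{i\in S,\,j\in T}v_{i}\cup v_{j}$ and sums over unordered pairs $\{S,T\}$ with $S\neq T$; collecting terms, the integer coefficient of $v_{i}\cup v_{j}$ ($i\neq j$) is $\#\{(S,T):S\neq T,\ i\in S,\ j\in T\}=2^{2n-2}-2^{n-2}$ and the coefficient of $v_{i}\cup v_{i}$ is $\#\{\{S,T\}:S\neq T,\ i\in S\cap T\}=2^{2n-3}-2^{n-2}$, both even for $n\geq 3$, so the whole class vanishes and $\lambda_{K/F}=\langle 0\rangle=1$. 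If one prefers to stay inside the Gauss-sum framework of this paper, the same conclusion can be extracted by grouping the ramified $\chi$ into pairs $\{\chi,\chi\eta\}$ with $\eta$ the unramified quadratic character, using $W(\chi\eta,\psi)=(-1)^{a(\chi)}W(\chi,\psi)$ (where $a(\chi)$ is the Artin conductor exponent) together with $W(\chi,\psi)^{2}=\chi(-1)$; but then collapsing the resulting product of Hilbert symbols $\prod_{k}(-1,[\chi_{k}])_{F}$ to $1$ is exactly the same identity in disguise, now mediated by the conductor--discriminant formula, and I would regard that collapse as the genuine obstacle either way.
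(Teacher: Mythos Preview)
Your proof is correct and follows the same skeleton as the paper's: write $\rho=\mathrm{Ind}_{K/F}(1)$, use Deligne's formula $W(\rho,\psi)=W(\det\rho,\psi)\cdot\langle\mathrm{sw}_{2}(\rho)\rangle$, check that $\det\rho$ is trivial, and show $\mathrm{sw}_{2}(\rho)=0$. The only genuine difference is in the last step. The paper invokes Kahn's theorem (that $s_{2}$ of the regular representation vanishes whenever $\mathrm{rk}_{2}(G)\neq 1$ and $G$ is not metacyclic, which applies here since $G\cong(\bbZ/2)^{m}$ with $m\geq 3$), whereas you compute $\mathrm{sw}_{2}(\rho)=\sum_{\{v,v'\}\subset V,\,v\neq v'}v\cup v'$ directly and kill it by a parity count in an $\bbF_{2}$-basis. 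Your route is more self-contained and makes transparent exactly why the bound $m\geq 3$ is needed (both coefficients $2^{2n-2}-2^{n-2}$ and $2^{2n-3}-2^{n-2}$ become even precisely from $n\geq 3$); the paper's route is shorter but imports a stronger external result. Your preliminary remarks that $\lambda_{K/F}$ is independent of $\psi$ and that $\lambda_{K/F}^{2}=1$ are correct and pleasant but not logically needed once you appeal to Deligne's formula. The alternative ``Gauss-sum'' sketch in your final sentence is vaguer and I would drop it; the Stiefel--Whitney argument you give is already complete.
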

%In Example \ref{Example wild} we compute
%$\lambda_{F/\bbQ_2}$, where $F$ is a quadratic extension of $\bbQ_2$ explicitly.

On pp. 7-8, Theorem \ref{Theorem 3.21} and Theorem \ref{Theorem 3.26} are proven.

\section{\textbf{Notations and Preliminaries}}

Let $F$  be a non-Archimedean local field of characteristic zero,
i.e., a finite extension of the field $\mathbb{Q}_p$ (field of $p$-adic numbers),
where $p$ is a prime.
Let $O_F$ be the 
ring of integers in the local field $F$ and $P_F=\pi_F O_F$ is the unique prime ideal in $O_F$ 
and $\pi_F$ is a uniformizer, i.e., an element in $P_F$ whose valuation is one, i.e.,
 $\nu_F(\pi_F)=1$. Let $q_F$ be the cardinality of the residue field $k_F$ of $F$.
Let $U_F=O_F-P_F$ be the group of units in $O_F$.
Let $P_{F}^{i}=\{x\in F:\nu_F(x)\geq i\}$ and for $i\geq 0$ define $U_{F}^i=1+P_{F}^{i}$
(with proviso $U_{F}^{0}=U_F=O_{F}^{\times}$).
We also consider that $a(\chi)$ is the conductor of 
 nontrivial character $\chi: F^\times\to \mathbb{C}^\times$, i.e., $a(\chi)$ is the smallest integer $m\geq 0$ such 
 that $\chi$ is trivial
 on $U_{F}^{m}$. We say $\chi$ is unramified if the conductor of $\chi$ is zero and otherwise ramified.
Throughout the paper, when $K/F$
is unramified we choose uniformizers $\pi_K=\pi_F$. And when $K/F$ is ramified (both tame and wild) we choose
uniformizers $\pi_F=N_{K/F}(\pi_K)$, where $N_{K/F}$ is the norm map from $K^\times$ to $F^\times$.
In this paper $\Delta_{K/F}:=\det(\text{Ind}_{K/F}(1))$.

The conductor of any nontrivial additive character $\psi$ of the field $F$ is an integer $n(\psi)$ if $\psi$ is trivial
on $P_{F}^{-n(\psi)}$, but nontrivial on $P_{F}^{-n(\psi)-1}$. 
 
%\begin{dfn}[\textbf{Different and Discriminant}] 
% Let $K/F$ be a finite separable extension of non-Archimedean local field $F$. We define the \textbf{inverse different (or codifferent)}
% $\mathcal{D}_{K/F}^{-1}$ of $K$ over $F$ to be $\pi_{K}^{-d_{K/F}}O_K$, where $d_{K/F}$ is the largest integer (this is the 
% exponent of the different $\mathcal{D}_{K/F}$) such that 
% \begin{center}
%  $\mathrm{Tr}_{K/F}(\pi_{K}^{-d_{K/F}}O_K)\subseteq O_F$,
% \end{center}
% where $\text{Tr}_{K/F}$ is the trace map from $K$ to $F$.
%Then the \textbf{different} is defined by:
%\begin{center}
% $\mathcal{D}_{K/F}=\pi_{K}^{d_{K/F}}O_K$
%\end{center}
%and the \textbf{discriminant} $D_{K/F}$ is 
%\begin{center}
% $D_{K/F}=N_{K/F}(\pi_{K}^{d_{K/F}})O_F$.
%\end{center}
 %Thus it is easy to see that $D_{K/F}$ is an \textbf{ideal of $O_F$}.
 
%\end{dfn}

%\begin{dfn}[\textbf{Metacyclic groups and generalized quaternion groups}]
%A metacyclic group is the extension of two cyclic groups, i.e., an extension of a cyclic group
%by a cyclic group. Every cyclic group is a metacyclic group because unit group $\{1\}$ is 
%considered as a cyclic group (generated by one element). 
 
%\end{dfn}

\subsection{Local constant formula for character}

For a nontrivial multiplicative character $\chi$ of $F^\times$ and nontrivial additive character $\psi$ of $F$, we have 
(cf. \cite{JT1}, p. 94):
\begin{equation}\label{eqn 2.5}
 W(\chi,\psi)=\chi(c)q_F^{-a(\chi)/2}\sum_{x\in\frac{U_F}{U_{F}^{a(\chi)}}}\chi^{-1}(x)\psi(x/c),
\end{equation}
where $c=\pi_{F}^{a(\chi)+n(\psi)}$. 
%Now if $u\in U_F$ is unit and replace $c=cu$, then we would have 
%\begin{equation}
%W(\chi,\psi,cu)=\chi(c)q_F^{-\frac{a(\chi)}{2}}\sum_{x\in\frac{U_F}{U_{F}^{a(\chi)}}}\chi^{-1}(x/u)\psi(x/cu)=W(\chi,\psi,c).
%\end{equation}
%Therefore, $W(\chi,\psi,c)$ depends only on the exponent $\nu_{F}(c)=a(\chi)+n(\psi)$. Thus we 
%can write $W(\chi,\psi, c)=W(\chi,\psi)$, because $c$ is determined by 
%$\nu_F(c)=a(\chi)+n(\psi)$ up to a unit $u$ which has \textbf{no influence on} $W(\chi,\psi,c)$.
%If $\chi$ is unramified, i.e., $a(\chi)=0$, therefore $\nu_F(c)=n(\psi)$. Then from the formula of $W(\chi,\psi,c)$, we can write
%\begin{equation}\label{eqn 2.3.5}
% W(\chi,\psi,c)=\chi(c),
%\end{equation}
%and therefore, $W(1,\psi,c)=1$ if $\chi=1$ is the trivial character.

%We know that this local constant satisfies the following functional equation 
%(cf. \cite{JT2}):
%$$W(\chi,\psi)\cdot W(\chi^{-1},\psi)=1.$$ 
%This functional equation extends to 
%\begin{equation}\label{eqn 2.3.23}
% W(\rho,\psi)\cdot W(\rho^{V},\psi)=\mathrm{det}_{\rho}(-1),
%\end{equation}
%where $\rho$ is any virtual representation of the Weil group $W_F$, $\rho^{V}$ is the contragredient and $\psi$ is any nontrivial additive
%character of $F$. 
%This is formula (3) on p. 190 of \cite{BH} for $s=\frac{1}{2}$.

\begin{dfn}[\textbf{Canonical additive character}] \label{Definition of canonical additive character}

We define the non trivial additive character of $F$, $\psi_F:F\to\mathbb{C}^\times$ as the composition of the following 
four maps:
\begin{center}
 $F\xrightarrow{\mathrm{Tr}_{F/\mathbb{Q}_p}}\mathbb{Q}_p\xrightarrow{\alpha}\mathbb{Q}_p/\mathbb{Z}_p
 \xrightarrow{\beta}\mathbb{Q}/\mathbb{Z}\xrightarrow{\gamma}\mathbb{C}^\times$,
\end{center}
where
\begin{enumerate}
 \item $\mathrm{Tr}_{F/\mathbb{Q}_p}$ is the trace from $F$ to $\mathbb{Q}_p$,
 \item $\alpha$ is the canonical surjection map,
 \item $\beta$ is the canonical injection which maps $\mathbb{Q}_p/\mathbb{Z}_p$ onto the $p$-component of the 
 divisible group $\mathbb{Q}/\mathbb{Z}$ and 
 \item $\gamma$ is the exponential map $x\mapsto e^{2\pi i x}$, where $i=\sqrt{-1}$.
\end{enumerate}
For every $x\in\mathbb{Q}_p$, there is a rational $r$, uniquely determined modulo $1$, such that $x-r\in\mathbb{Z}_p$.
Then $\psi_{\bbQ_p}(x)=\psi_{\bbQ_p}(r)=e^{2\pi i r}$.
The nontrivial additive character  $\psi_F=\psi_{\bbQ_p}\circ \rm{Tr}_{F/\bbQ_p}$ of $F$ 
is called the \textbf{canonical additive character} (cf. \cite{JT1}, p. 92).
\end{dfn}

\subsection{Classical Gauss sums}

Let $k_q$ be a finite field. Let $p$ be the characteristic of $k_q$; then the prime  field 
contained in $k_q$ is $k_p$. 
The structure of the {\bf canonical} additive character $\psi_q$ of $k_q$ is the same as the structure 
of the canonical (see the definition \ref{Definition of canonical additive character})  character
$\psi_F$, namely {\bf it comes by trace} from the canonical character of the base field, i.e., 
\begin{center}
 $\psi_q=\psi_p\circ \text{Tr}_{k_q/k_p}$,
\end{center}
where 
\begin{center}
 $\psi_p(x):=e^{\frac{2\pi i x}{p}}$ \hspace{.3cm} for all $x\in k_p$.
\end{center}

\textbf{Gauss sums:} Let $\chi, \psi$ be a multiplicative and an additive character respectively of $k_q$. 
Then the Gauss sum $G(\chi,\psi)$ is 
defined
by 
\begin{equation}
 G(\chi,\psi)=\sum_{x\in k_{q}^{\times}}\chi(x)\psi(x).
\end{equation}
For computation of $\lambda_{K/F}$, where $K/F$ is a tamely ramified quadratic extension, we will use the following 
theorem.
\begin{thm}[\cite{LN}, p. 199, Theorem 5.15]\label{Theorem 2.7}
Let $k_q$ be a finite field with $q=p^s$, where $p$ is an odd prime and $s\in\mathbb{N}$. Let $\chi$ be the quadratic character of 
$k_q$ and let $\psi$ be the canonical additive character of $k_q$. Then
\begin{equation}
 G(\chi,\psi)=\begin{cases}
               (-1)^{s-1}q^{\frac{1}{2}} & \text{if $p\equiv 1\pmod{4}$},\\
               (-1)^{s-1}i^sq^{\frac{1}{2}} & \text{if $p\equiv 3\pmod{4}$}.
              \end{cases}
\end{equation}
\end{thm}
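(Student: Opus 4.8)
The plan is to reduce the computation over $k_q$ to the prime field $k_p$ by the Hasse--Davenport lifting relation, to invoke the classical determination of the argument of the quadratic Gauss sum over $k_p$, and finally to raise the prime-field value to the $s$-th power with careful bookkeeping. First I would record the standard fact that $|G(\chi,\psi)|^2=q$ for the nontrivial quadratic character $\chi$ and the nontrivial additive character $\psi$: expanding $G(\chi,\psi)\overline{G(\chi,\psi)}=\sum_{x,y\in k_q^\times}\chi(x)\chi^{-1}(y)\psi(x-y)$ and substituting $x=ty$ reduces the inner sum to the orthogonality relations for $\psi$ on $k_q$, giving $q$. Hence $|G(\chi,\psi)|=q^{1/2}$, and only the argument (a sign, or a power of $i$) remains to be pinned down.

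The key structural input is that both characters in $G(\chi,\psi)$ are lifts from $k_p$. Since the norm $N_{k_q/k_p}\colon k_q^\times\to k_p^\times$ is surjective and the quadratic character $\eta$ of $k_p$ is nontrivial, $\eta\circ N_{k_q/k_p}$ is the unique quadratic character of $k_q$, i.e. $\chi=\eta\circ N_{k_q/k_p}$; and by the very construction of the canonical character recalled above one has $\psi=\psi_p\circ\text{Tr}_{k_q/k_p}$. I would then apply the Hasse--Davenport relation in the form
\[
 -G(\chi,\psi)=\bigl(-G(\eta,\psi_p)\bigr)^{s},
\]
which rearranges to $G(\chi,\psi)=(-1)^{s-1}\,G(\eta,\psi_p)^{s}$. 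This reduces the entire problem to the single prime-field Gauss sum $G(\eta,\psi_p)$.

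It remains to evaluate $G(\eta,\psi_p)=\sum_{x\in k_p^\times}\eta(x)e^{2\pi i x/p}$, and this is where the genuine difficulty lies. Its modulus is $p^{1/2}$ by the first step, but fixing its argument is precisely the classical sign problem of Gauss. I would carry this out by the finite-Fourier-transform method: writing $\zeta=e^{2\pi i/p}$, one checks $G(\eta,\psi_p)=\sum_{j=0}^{p-1}\zeta^{j^2}$ (because $\sum_{x\in k_p^\times}\zeta^x=-1$ lets one rewrite the character sum as the square sum), and this is the trace of the matrix $W=(\zeta^{jk})_{0\le j,k\le p-1}$. Since $W^2=p\,P$ for the reflection permutation $j\mapsto -j$, every eigenvalue of $W$ lies in $\{\pm p^{1/2},\pm i\,p^{1/2}\}$; computing the multiplicities of these four values (equivalently, evaluating the Vandermonde determinant $\det W$ together with the characteristic polynomial of $W$) pins down the trace and yields
\[
 G(\eta,\psi_p)=\begin{cases} p^{1/2} & p\equiv 1\pmod 4,\\ i\,p^{1/2} & p\equiv 3\pmod 4.\end{cases}
\]
The absolute value and the four possible eigenvalues are elementary, but deciding which root actually occurs is the main obstacle and requires the delicate multiplicity count (a theta-function transformation or a Poisson-summation argument could be substituted here for the same purpose).

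Finally I would substitute this into $G(\chi,\psi)=(-1)^{s-1}G(\eta,\psi_p)^{s}$ and track the powers, using $q^{1/2}=p^{s/2}$. When $p\equiv 1\pmod 4$ this gives $(-1)^{s-1}(p^{1/2})^{s}=(-1)^{s-1}q^{1/2}$; when $p\equiv 3\pmod 4$ it gives $(-1)^{s-1}(i\,p^{1/2})^{s}=(-1)^{s-1}i^{s}q^{1/2}$. These are exactly the two cases of the statement, which completes the proof.
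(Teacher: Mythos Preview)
The paper does not prove this theorem; it is quoted verbatim from Lidl--Niederreiter \cite{LN} and used as a black box in the proof of Theorem~\ref{Theorem 3.21}. Your argument is correct and is in fact the same strategy Lidl--Niederreiter employ: reduce to the prime field via the Hasse--Davenport lifting relation (their Theorem~5.14), then feed in the classical sign determination of $G(\eta,\psi_p)$ over $k_p$ (their Theorem~5.12, proved there by an equivalent analytic method). The only soft spot is that you acknowledge but do not actually carry out the eigenvalue-multiplicity count for the DFT matrix $W$; that step is genuinely the crux and would need to be written out in full (or replaced by a cited reference) for the proof to stand on its own.
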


\section{\textbf{Explicit computation of $\lambda_{K/F}$, where $K/F$ is a quadratic extension}}

Let $K/F$ be a quadratic extension of the field $F/\bbQ_p$. Let $G=\mathrm{Gal}(K/F)$
be the Galois group of the extension $K/F$. Let $t$ be the \textbf{ramification break or jump} (cf. \cite{JPS}) of 
the Galois group
$G$ (or of the extension $K/F$). Then it can be
proved that the conductor of 
$\omega_{K/F}$ (the quadratic character of $F^\times$ associated to $K$ by class field theory) is
$t+1$. 
When $K/F$ is unramified we have $t=-1$, therefore the conductor of a quadratic character $\omega_{K/F}$ of $F^\times$
is zero, i.e., 
$\omega_{K/F}$ is unramified.
And when $K/F$ is tamely ramified we have $t=0$, then $a(\omega_{K/F})=1$.
In the wildly ramified case (which occurs if $p=2$) it can be proved that $a(\omega_{K/F})=t+1$ is,
{\bf up to the exceptional case $t=2\cdot e_{F/\bbQ_2}$}, always an \textbf{even number} which 
can be seen by the filtration of $F^\times$ (cf. p. 50 of \cite{SABT}).

 \subsection{\textbf{Computation of $\lambda_{K/F}$, where $K/F$ is a tamely ramified quadratic extension}}

The existence of a tamely ramified quadratic character $\chi$ (which is not unramified)
 of a local field $F$ implies $p\ne2$ for the residue characteristic. Then
 \begin{center}
  $F^\times/{F^\times}^2\cong V$
 \end{center}
is isomorphic to Klein's $4$-group. So we have only $3$ nontrivial quadratic characters in that case, 
corresponding to $3$
quadratic extensions $K/F$. One is unramified and other two are ramified. The unramified case is well settled.
The two ramified quadratic characters determine two different 
quadratic ramified extensions of $F$.\\
In the ramified case we have $a(\chi)=1$ because it is tame,
and we take $\psi$ of conductor $-1$. Then we have $a(\chi)+n(\psi)=0$ and therefore in the formula of $W(\chi,\psi)$ 
(cf. equation (\ref{eqn 2.5})) we can take $c=1$. So we obtain:
\begin{equation}\label{eqn 3.33}
 W(\chi,\psi)=q_{F}^{-\frac{1}{2}}\sum_{x\in U_F/U_{F}^{1}}\chi^{-1}(x)\psi(x)=
 q_{F}^{-\frac{1}{2}}\sum_{x\in k_{F}^{\times}}\chi'^{-1}(x)\psi'(x),
\end{equation}
where $\chi'$ is the quadratic character of the residue field $k_{F}^{\times}$, and 
$\psi'$ is an additive character of $k_F$.
When $n(\psi)=-1$, we observe that both {\bf the ramified characters $\chi$ give the same $\chi'$, hence
the same 
$W(\chi,\psi)$}, 
because one is different from other by a quadratic unramified character twist.
To compute an explicit formula for $\lambda_{K/F}(\psi_{-1})$, where $K/F$ is a tamely ramified quadratic extension and 
$\psi_{-1}$ is an additive character of $F$ with conductor $-1$, we need to use 
{\bf classical quadratic Gauss sums}.

%Before going to our general choice, we go through the case $F=\bbQ_p$. Then the conductor of the canonical additive 
%character $\psi_{\bbQ_p}$ of $\bbQ_p$ is zero. We also have 
%\begin{center}
% $\psi_p(x)=e^{\frac{2\pi i x}{p}}=\psi_{\bbQ_p}(\frac{1}{p}x)$ for all $x\in\bbZ$,
%\end{center}
%hence $\psi_{-1}:=\frac{1}{p}\cdot\psi_{\bbQ_p}$ is good in the case $F=\bbQ_p$.

 Let $\psi_{-1}$ be an additive character of $F/\bbQ_p$ of conductor $-1$, i.e.,
 $\psi_{-1}:F/P_F\to\bbC^\times$. Now restrict 
$\psi_{-1}$ to $O_F$, it will be one of the characters $a\cdot\psi_{q_F}$, for some $a\in k_{q_F}^{\times}$ and usually it will not be 
$\psi_{q_F}$ itself.
Therefore, choosing $\psi_{-1}$ is very important and we have to choose $\psi_{-1}$ such a way that its restriction to $O_F$
is exactly $\psi_{q_F}$. Then we will be able to use the quadratic classical Gauss sum in
the $\lambda$-function computation.
We also know that there exists an element $c\in F^\times$ such that 
\begin{equation}\label{eqn 3.34}
 \psi_{-1}=c\cdot\psi_F
\end{equation}
induces the canonical character $\psi_{q_F}$ on the residue field $k_F$.

Now question is: {\bf Finding proper $c\in F^\times$ for which $\psi_{-1}|_{O_F}=c\cdot\psi_F|_{O_F}=\psi_{q_F}$}, i.e., 
the canonical character of the residue field $k_F$.

From the definition of conductor of the additive character $\psi_{-1}$ of $F$, we obtain from the construction (\ref{eqn 3.34})
\begin{equation}\label{eqn 3.35}
 -1=\nu_F(c)+n(\psi_F)=\nu_F(c)+d_{F/\bbQ_p},
\end{equation}
where $d_{F/\bbQ_p}$ is the exponent of the different $\mathcal{D}_{F/\bbQ_p}$.
In the next two lemmas we choose the proper $c$ for our requirement.

\begin{lem}\label{Lemma 3.20}
 Let $F/\bbQ_p$ be a local field and let $\psi_{-1}$ be an additive character of $F$ of conductor $-1$. 
 Let $\psi_F$ be the canonical
 character of $F$. Let $c\in F^\times$ be any element such that $-1=\nu_F(c)+d_{F/\bbQ_p}$, and 
 \begin{equation}\label{eqn 3.36}
  \text{Tr}_{F/F_0}(c)=\frac{1}{p},
 \end{equation}
where $F_0/\bbQ_p$ is the maximal unramified subextension in $F/\bbQ_p$. Then 
the restriction of $\psi_{-1}=c\cdot\psi_F$ to $O_F$ is the canonical character $\psi_{q_F}$ of the residue field $k_F$ of $F$.
\end{lem}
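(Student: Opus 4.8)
The plan is to compute $\psi_{-1}(x)=\psi_F(cx)=\psi_{\bbQ_p}(\mathrm{Tr}_{F/\bbQ_p}(cx))$ directly for $x\in O_F$, passing through the intermediate field $F_0$, and to show the answer depends only on $\overline{x}\in k_F$ and equals $\psi_{q_F}(\overline{x})$. First I would record two standard facts and fix notation $d:=d_{F/\bbQ_p}$. Since $F_0/\bbQ_p$ is unramified, $\mathcal{D}_{F_0/\bbQ_p}=O_{F_0}$, so the tower formula gives $\mathcal{D}_{F/\bbQ_p}=\mathcal{D}_{F/F_0}$ and hence $\nu_F(\mathcal{D}_{F/F_0})=d$; and since $F/F_0$ is totally ramified, $k_F=k_{F_0}$, so every $x\in O_F$ has a lift $x_0\in O_{F_0}$ with $x\equiv x_0\pmod{P_F}$.

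The heart of the argument is a congruence modulo $O_{F_0}$. Writing $x=x_0+(x-x_0)$ with $x-x_0\in P_F$, the hypothesis $(\ref{eqn 3.35})$ gives $\nu_F(c(x-x_0))\ge \nu_F(c)+1=-d$, so $c(x-x_0)$ lies in the inverse different $\mathcal{D}_{F/F_0}^{-1}=P_F^{-d}$, whence $\mathrm{Tr}_{F/F_0}(c(x-x_0))\in O_{F_0}$ by the defining property of the different. On the other hand $x_0\in O_{F_0}$ and $\mathrm{Tr}_{F/F_0}$ is $F_0$-linear, so by $(\ref{eqn 3.36})$ one has $\mathrm{Tr}_{F/F_0}(cx_0)=x_0\cdot\mathrm{Tr}_{F/F_0}(c)=x_0/p$. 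Adding, $\mathrm{Tr}_{F/F_0}(cx)\equiv x_0/p\pmod{O_{F_0}}$. Using $\mathrm{Tr}_{F/\bbQ_p}=\mathrm{Tr}_{F_0/\bbQ_p}\circ\mathrm{Tr}_{F/F_0}$, the inclusion $\mathrm{Tr}_{F_0/\bbQ_p}(O_{F_0})\subseteq\bbZ_p$, and the triviality of $\psi_{\bbQ_p}$ on $\bbZ_p$, this yields
\[
 \psi_{-1}(x)=\psi_{\bbQ_p}\!\left(\mathrm{Tr}_{F_0/\bbQ_p}(x_0/p)\right)=\psi_{\bbQ_p}\!\left(\tfrac1p\,\mathrm{Tr}_{F_0/\bbQ_p}(x_0)\right).
\]

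To finish I would identify the right-hand side with the canonical character of $k_F$. For $a\in\bbZ_p$ one has $\psi_{\bbQ_p}(a/p)=e^{2\pi i\,\overline{a}/p}=\psi_p(\overline{a})$, where $\overline{a}$ denotes the image of $a$ in $\bbF_p$; and for the unramified layer $F_0/\bbQ_p$ the integral trace reduces to the residue-field trace, i.e. $\overline{\mathrm{Tr}_{F_0/\bbQ_p}(x_0)}=\mathrm{Tr}_{k_{F_0}/\bbF_p}(\overline{x_0})$, while $\overline{x_0}=\overline{x}$ in $k_{F_0}=k_F$. Combining these, $\psi_{-1}(x)=\psi_p\!\left(\mathrm{Tr}_{k_F/\bbF_p}(\overline{x})\right)=\psi_{q_F}(\overline{x})$ by the definition of $\psi_{q_F}$ recalled in Section~2. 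In particular $\psi_{-1}|_{O_F}$ is nontrivial and factors through $k_F$, consistent with $(\ref{eqn 3.35})$ forcing the conductor of $\psi_{-1}$ to be $-1$.

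The main obstacle is the congruence step: one must exploit the \emph{exact} valuation $\nu_F(c)=-1-d$ so that $c(x-x_0)$ lands precisely in $\mathcal{D}_{F/F_0}^{-1}$ — with a smaller valuation the trace need not be integral, and with a larger one $\psi_{-1}$ would be trivial on $O_F$. The remaining ingredients (tower formula for the different, triviality of the different and integrality/reduction of the trace for $F_0/\bbQ_p$, and $F_0$-linearity of $\mathrm{Tr}_{F/F_0}$) are classical and only need to be assembled in the right order.
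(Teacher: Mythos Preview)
Your proof is correct and follows essentially the same line as the paper's: factor $\mathrm{Tr}_{F/\bbQ_p}$ through $F_0$, use $F_0$-linearity of $\mathrm{Tr}_{F/F_0}$ together with $\mathrm{Tr}_{F/F_0}(c)=1/p$, and then reduce the unramified trace modulo $p$ to the residue-field trace to recover $\psi_{q_F}$. The only difference is that the paper picks from the outset a representative $x\in O_{F_0}$ for $\bar x\in k_F=k_{F_0}$ (so the $F_0$-linearity step is immediate), whereas you take an arbitrary $x\in O_F$, lift to $x_0\in O_{F_0}$, and explicitly kill the error term $c(x-x_0)$ via the inclusion $c(x-x_0)\in P_F^{-d}=\mathcal{D}_{F/F_0}^{-1}$; this makes your write-up a touch more rigorous, but the underlying argument is the same.
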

\begin{proof}
 Since $F_0/\bbQ_p$ is the maximal unramified subextension in $F/\bbQ_p$, we have $\pi_{F_0}=p$, and the residue fields of 
 $F$ and $F_0$ are 
 isomorphic, i.e., $k_{F_0}\cong k_{F}$, because $F/F_0$ is totally ramified extension.
 Then every element of $O_F/P_F$ can be considered as an element of $O_{F_0}/P_{F_0}$.
 Moreover, since $F_0/\bbQ_p$ is the maximal unramified extension, then from Proposition 2 of \cite{AW} on p. 140,
 for $x\in O_{F_0}$ we have 
 $$\rho_{p}(\text{Tr}_{F_0/\bbQ_p}(x))=\text{Tr}_{k_{F_0}/k_{\bbQ_p}}(\rho_0(x)),$$
 where $\rho_0,\,\rho_p$ are the canonical homomorphisms of $O_{F_0}$ onto $k_{F_0}$, and of $O_{\bbQ_p}$ onto $k_{\bbQ_p}$,
 respectively. Then 
 for $x\in k_{F_0}$ we can write
 \begin{equation}\label{eqn 3.50}
  \text{Tr}_{F_0/\bbQ_p}(x)=\text{Tr}_{k_{F_0}/k_{\bbQ_p}}(x).
 \end{equation}
Furthermore, since $F/F_0$ is totally ramified, we have $k_{F}=k_{F_0}$, then the trace map 
for the tower of the residue fields $k_{F}/k_{F_0}/k_{\bbQ_p}$ is:
\begin{equation}\label{eqn 3.51}
 \text{Tr}_{k_F/k_{\bbQ_p}}(x)=\text{Tr}_{k_{F_0}/k_{\bbQ_p}}\circ\text{Tr}_{k_F/k_{F_0}}(x)=\text{Tr}_{k_{F_0}/k_{\bbQ_p}}(x),
\end{equation}
for all $x\in k_F$. Then from the equations (\ref{eqn 3.50}) and (\ref{eqn 3.51}) we obtain 
\begin{equation}\label{eqn 3.52}
 \text{Tr}_{F_0/\bbQ_p}(x)=\text{Tr}_{k_F/k_{\bbQ_p}}(x)
\end{equation}
 for all $x\in k_F$.

Since the conductor of $\psi_{-1}$ is $-1$, for $x\in O_F/P_F(=O_{F_0}/P_{F_0}$ because $F/F_0$ is totally ramified)  we have  
\begin{align*}
  \psi_{-1}(x)
  &=c\cdot\psi_F(x)
  =\psi_{F}(cx)
  =\psi_{\bbQ_p}(\text{Tr}_{F/\bbQ_p}(cx))
  =\psi_{\bbQ_p}(\text{Tr}_{F_0/\bbQ_p}\circ\text{Tr}_{F/F_0}(cx))\\
  &=\psi_{\bbQ_p}(\text{Tr}_{F_0/\bbQ_p}(x\cdot\text{Tr}_{F/F_0}(c)))\\
  &=\psi_{\bbQ_p}(\text{Tr}_{F_0/\bbQ_p}(\frac{1}{p}x)), \quad\text{since $x\in O_F/P_F=O_{F_0}/P_{F_0}$ and $\text{Tr}_{F/F_0}(c)=\frac{1}{p}$}\\
  &=\psi_{\bbQ_p}(\frac{1}{p}\text{Tr}_{F_0/\bbQ_p}(x)), \quad\text{because $\frac{1}{p}\in\bbQ_p$}\\
 &=e^{\frac{2\pi i \text{Tr}_{F_0/\bbQ_p}(x)}{p}},\quad\text{because $\psi_{\bbQ_p}(x)=e^{2\pi i x}$}\\
&=e^{\frac{2\pi i\text{Tr}_{k_F/k_{\bbQ_p}}(x)}{p}},\quad \text{using equation $(\ref{eqn 3.52})$}\\
&=\psi_{q_F}(x).
 \end{align*}
This competes the lemma.

\end{proof}

The next step is to produce good elements $c$ more explicitly. By using Lemma \ref{Lemma 3.20}, 
in the next lemma we see more general choices of $c$.

\begin{lem}\label{Lemma 3.21}
 Let $F/\bbQ_p$ be a tamely ramified local field and let $\psi_{-1}$ be an additive character of $F$ 
 of conductor $-1$. Let $\psi_F$ be the canonical
 character of $F$. Let $F_0/\bbQ_p$ be the maximal unramified subextension in $F/\bbQ_p$.
 Let $c\in F^\times$ be any element such that $-1=\nu_F(c)+d_{F/\bbQ_p}$, then 
 \begin{center}
  $c'=\frac{c}{\text{Tr}_{F/F_0}(pc)}$, 
 \end{center}
 fulfills conditions (\ref{eqn 3.35}), (\ref{eqn 3.36}), and hence $\psi_{-1}|_{O_F}=c'\cdot\psi_F|_{O_F}=\psi_{q_F}$.
\end{lem}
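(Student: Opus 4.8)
The plan is to verify that $c'$ meets the two hypotheses (\ref{eqn 3.35}) and (\ref{eqn 3.36}) of Lemma \ref{Lemma 3.20} and then to quote that lemma; the only substantive input is the behaviour of $\text{Tr}_{F/F_0}$ on units of a tamely ramified extension.

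First I would fix the numerology of the tower $\bbQ_p\subseteq F_0\subseteq F$. Put $e=e_{F/\bbQ_p}$, so $F_0/\bbQ_p$ is unramified, $F/F_0$ is totally ramified of degree $e$, and tameness means $p\nmid e$. Since $\cD_{F_0/\bbQ_p}=O_{F_0}$, the tower formula for the different gives $\cD_{F/\bbQ_p}=\cD_{F/F_0}$, whence $d_{F/\bbQ_p}=d_{F/F_0}=e-1$. Also $p=\pi_{F_0}$ and $P_{F_0}O_F=P_F^{e}$, so $\nu_F(p)=e$; together with the hypothesis $\nu_F(c)=-1-d_{F/\bbQ_p}=-e$ this yields $\nu_F(pc)=0$, i.e. $pc\in O_F^\times$.

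The key step is that, because $F/F_0$ is tamely and totally ramified, $\text{Tr}_{F/F_0}$ sends $O_F^\times$ into $O_{F_0}^\times$. To see this I would pick an Eisenstein generator $F=F_0(\pi)$ with $\pi^{e}=pu_0$, $u_0\in O_{F_0}^\times$, so that $O_F=O_{F_0}[\pi]$ with $O_{F_0}$-basis $1,\pi,\dots,\pi^{e-1}$; the conjugates of $\pi$ being $\zeta^{k}\pi$ over the $e$-th roots of unity, one gets $\text{Tr}_{F/F_0}(\pi^{j})=0$ for $1\le j\le e-1$ and $\text{Tr}_{F/F_0}(1)=e$. Hence for $u=a_0+a_1\pi+\cdots\in O_F^\times$ we have $a_0\in O_{F_0}^\times$ and $\text{Tr}_{F/F_0}(u)=ea_0\in O_{F_0}^\times$, using $p\nmid e$. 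Applying this to $pc$ gives $\text{Tr}_{F/F_0}(pc)\in O_{F_0}^\times$; in particular it is nonzero, so $c'$ is a well-defined element of $F^\times$ and $\nu_F(\text{Tr}_{F/F_0}(pc))=0$.

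The two conditions then drop out: $\nu_F(c')=\nu_F(c)-\nu_F(\text{Tr}_{F/F_0}(pc))=\nu_F(c)$, so (\ref{eqn 3.35}) holds for $c'$; and since $\text{Tr}_{F/F_0}(pc)\in F_0$ is a scalar for the $F_0$-linear map $\text{Tr}_{F/F_0}$ and $p\in\bbQ_p\subseteq F_0$,
\begin{equation*}
\text{Tr}_{F/F_0}(c')=\frac{\text{Tr}_{F/F_0}(c)}{\text{Tr}_{F/F_0}(pc)}=\frac{\text{Tr}_{F/F_0}(c)}{p\,\text{Tr}_{F/F_0}(c)}=\frac1p ,
\end{equation*}
where $\text{Tr}_{F/F_0}(c)\ne0$ because $p\,\text{Tr}_{F/F_0}(c)=\text{Tr}_{F/F_0}(pc)$ is a unit, so (\ref{eqn 3.36}) holds for $c'$. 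Lemma \ref{Lemma 3.20} applied with $c$ replaced by $c'$ then gives $\psi_{-1}|_{O_F}=c'\cdot\psi_F|_{O_F}=\psi_{q_F}$. I expect the one delicate point to be the claim that the trace of a unit is again a unit: that is exactly where tameness ($p\nmid e$) enters, and it is what forces the rescaling factor $\text{Tr}_{F/F_0}(pc)$; the rest is bookkeeping with valuations and $F_0$-linearity of the trace.
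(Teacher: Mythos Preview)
Your proof is correct and follows essentially the same route as the paper: write $\nu_F(c)=-e$, show $\text{Tr}_{F/F_0}(pc)\in U_{F_0}$, deduce $\nu_F(c')=\nu_F(c)$ and $\text{Tr}_{F/F_0}(c')=\frac{1}{p}$ by $F_0$-linearity of the trace, then invoke Lemma~\ref{Lemma 3.20}. The only difference is that you actually justify the step $\text{Tr}_{F/F_0}(O_F^\times)\subseteq O_{F_0}^\times$ via the Eisenstein basis and $p\nmid e$, whereas the paper simply asserts $\text{Tr}_{F/F_0}(u(c))\in U_{F_0}$; your added detail is exactly where tameness enters and is a welcome clarification.
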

\begin{proof}
 By the given condition 
 we have $\nu_{F}(c)=-1-d_{F/\bbQ_p}=-1-(e_{F/\bbQ_p}-1)=-e_{F/\bbQ_p}$. Then we can write 
 $c=\pi_{F}^{-e_{F/\bbQ_p}}u(c)=p^{-1}u(c)$ for some $u(c)\in U_F$ because $F/\bbQ_p$ is tamely ramified, hence $p=\pi_{F}^{e_{F/\bbQ_p}}$. 
Then we can write 
 $$\text{Tr}_{F/F_0}(pc)=p\cdot\text{Tr}_{F/F_0}(c)=p\cdot p^{-1}u_0(c)=u_0(c)\in U_{F_0}\subset U_{F},$$ 
 where $u_0(c)=\text{Tr}_{F/F_0}(u(c))$,
 hence $\nu_{F}(\text{Tr}_{F/F_0}(pc))=0$. Then the valuation of $c'$ is:
 \begin{center}
  $\nu_F(c')=\nu_F(\frac{c}{\text{Tr}_{F/F_0}(pc)})=\nu_F(c)-\nu_F(\text{Tr}_{F/F_{0}}(pc))$\\
  $=\nu_F(c)-0=\nu_{F}(c)=-1-d_{F/\bbQ_p}$.
 \end{center}
 Since $\text{Tr}_{F/F_0}(pc)=u_0(c)\in U_{F_0}$, we have 
 \begin{center}
  $\text{Tr}_{F/F_0}(c')=\text{Tr}_{F/F_0}(\frac{c}{\text{Tr}_{F/F_0}(pc)})
  =\frac{1}{\text{Tr}_{F/F_0}(pc)}\cdot\text{Tr}_{F/F_0}(c)=\frac{1}{p\cdot\text{Tr}_{F/F_0}(c)}\cdot\text{Tr}_{F/F_0}(c)=\frac{1}{p}$.
 \end{center}
%because from Remark \ref{Remark 3.21} we have $\text{Tr}_{F/F_0}(c)\in P_{F_0}^{-1}-O_{F_0}$, and in particular $\text{Tr}_{F/F_0}(c)\ne 0$.

Thus we observe that here $c'\in F^\times$ satisfies equations (\ref{eqn 3.35}) and (\ref{eqn 3.36}). 
Therefore, from Lemma \ref{Lemma 3.20} we can see that $\psi_{-1}|_{O_F}=c'\cdot\psi_{F}|_{O_F}$ is the canonical additive 
character of $k_F$.
\end{proof}

By Lemmas \ref{Lemma 3.20} and \ref{Lemma 3.21} we get many good (in the sense that
$\psi_{-1}|_{O_F}=c\cdot\psi_{F}|_{O_F}=\psi_{q_F}$)
elements $c$ which we will use in our next theorem to calculate $\lambda_{K/F}$, where 
$K/F$ is a tamely ramified quadratic extension.

%\begin{thm}\label{Theorem 3.21}
% Let $K$ be a tamely ramified quadratic extension of $F/\bbQ_p$ with $q_F=p^s$. Let $\psi_F$ be the canonical additive character of $F$.
% Let $c\in F^\times$ with $-1=\nu_F(c)+d_{F/\bbQ_p}$, and $c'=\frac{c}{\text{Tr}_{F/F_0}(pc)}$, where $F_0/\bbQ_p$ is the maximal unramified
% extension in $F/\bbQ_p$. Let $\psi_{-1}$ be an additive character of $F$ with conductor $-1$, of the form $\psi_{-1}=c'\cdot\psi_F$.
% Then 
% \begin{equation*}
%  \lambda_{K/F}(\psi_F)=\Delta_{K/F}(c')\cdot\lambda_{K/F}(\psi_{-1}),
% \end{equation*}
%where 
% \begin{equation*}
% \lambda_{K/F}(\psi_{-1})=\begin{cases}
%                                               (-1)^{s-1} & \text{if $p\equiv 1\pmod{4}$}\\
%                                                  (-1)^{s-1}i^{s} & \text{if $p\equiv 3\pmod{4}$}.
%                                            \end{cases}
%\end{equation*}
%If we take $c=\pi_{F}^{-1-d_{F/\bbQ_p}}$, where $\pi_F$ is a norm for $K/F$, then 
%\begin{equation}
% \Delta_{K/F}(c')=\begin{cases}
%                   1 & \text{if $\overline{\text{Tr}_{F/F_0}(pc)}\in k_{F_0}^{\times}=k_{F}^{\times}$ is a square},\\
%                   -1 & \text{if $\overline{\text{Tr}_{F/F_0}(pc)}\in k_{F_0}^{\times}=k_{F}^{\times}$ is not a square}.
%                  \end{cases}
%\end{equation}
%Here "overline" stands for modulo $P_{F_0}$.
%\end{thm}

\begin{proof}[{\bf Proof of Theorem \ref{Theorem 3.21}}]
From \cite{BH}, p. 190, part (2) of the Proposition, we have 
$$\lambda_{K/F}(\psi_{-1})=\lambda_{K/F}(c'\psi_{F})=\Delta_{K/F}(c')\cdot \lambda_{K/F}(\psi_F).$$
Since $\Delta_{K/F}$ is quadratic, we can write $\Delta_{K/F}=\Delta_{K/F}^{-1}$. So we obtain
\begin{equation*}
 \lambda_{K/F}(\psi_F)=\Delta_{K/F}(c')\cdot\lambda_{K/F}(\psi_{-1}).
\end{equation*}
Now we have to compute $\lambda_{K/F}(\psi_{-1})$, and which we do in the following:\\
  Since $[K:F]=2$, we have $\text{Ind}_{K/F}(1)=1_F\oplus\omega_{K/F}$.
 The conductor of $\omega_{K/F}$ is $1$ because $K/F$ is a tamely ramified quadratic extension, and hence $t=0$, so 
 $a(\omega_{K/F})=t+1=1$. Therefore, we can consider 
$\omega_{K/F}$ as a character of $F^\times/U_{F}^{1}$. So the restriction of $\omega_{K/F}$ to $U_{F}$, 
$\text{res}(\omega_{K/F}):=\omega_{K/F}|_{U_F}$, we may consider as the uniquely determined
character of $k_{F}^{\times}$ of order $2$.
Since $c'$ satisfies equations (\ref{eqn 3.35}), (\ref{eqn 3.36}), then from Lemma \ref{Lemma 3.21} we have 
$\psi_{-1}|_{O_F}=c'\cdot\psi_F|_{O_F}=\psi_{q_F}$, and this is the canonical
character of $k_F$. Then from equation (\ref{eqn 3.33}) we can write
\begin{align*}
 \lambda_{K/F}(\psi_{-1})
 %&=W(\omega_{K/F},\psi_{-1})\\
 %&=q_{F}^{-\frac{1}{2}}\sum_{x\in U_F/U_{F}^{1}}\omega_{K/F}^{-1}(x)\psi_{-1}(x)\\
 %&=q_{F}^{-\frac{1}{2}}\sum_{x\in k_{F}^{\times}}\omega_{K/F}(x) \psi_{-1}(x)\\
 &=q_{F}^{-\frac{1}{2}}\sum_{x\in k_{F}^{\times}}\text{res}(\omega_{K/F})(x)\psi_{q_F}(x)\\
 &=q_{F}^{-\frac{1}{2}}\cdot G(\text{res}(\omega_{K/F}),\psi_{q_F}).
\end{align*}
Moreover, by Theorem \ref{Theorem 2.7} we have 
\begin{equation}
 G(\text{res}(\omega_{K/F}),\psi_{q_F})=\begin{cases}
                    (-1)^{s-1}q_{F}^{\frac{1}{2}} & \text{if $p\equiv 1\pmod{4}$}\\
                    (-1)^{s-1}i^{s}q_{F}^{\frac{1}{2}} & \text{if $p\equiv 3\pmod{4}$}.
                   \end{cases}
\end{equation}
By using the classical quadratic Gauss sum we obtain
\begin{equation}
 \lambda_{K/F}(\psi_{-1})=\begin{cases}
                    (-1)^{s-1} & \text{if $p\equiv 1\pmod{4}$}\\
                    (-1)^{s-1}i^{s} & \text{if $p\equiv 3\pmod{4}$}.
                   \end{cases}
\end{equation}

 We also can write 
$\Delta_{K/F}=\det(\text{Ind}_{K/F}(1))=\det(1_F\oplus \omega_{K/F})=\omega_{K/F}.$
So we have 
 $$\Delta_{K/F}(\pi_F)=\omega_{K/F}(\pi_F)=1,$$
because $\pi_F\in N_{K/F}(K^\times)$.

Under the assumption of the Theorem \ref{Theorem 3.21} we have $\pi_F\in N_{K/F}(K^\times)$, $\Delta_{K/F}=\omega_{K/F}$ and 
$c'=\frac{c}{\text{Tr}_{F/F_0}(pc)}$, where $c\in F^\times$ with $\nu_F(c)=-1-d_{F/\bbQ_p}$. Then we can write 
\begin{align*}
 \Delta_{K/F}(c')
 =\omega_{K/F}(c')
 &=\omega_{K/F}\left(\frac{c}{\text{Tr}_{F/F_0}(pc)}\right)\\
 &=\omega_{K/F}\left(\frac{\pi_{F}^{-e_{F/\bbQ_p}}u(c)}{u_0(c)}\right),
 \quad\text{where $c=\pi_{F}^{-e_{F/\bbQ_p}}u(c)$, $\text{Tr}_{F/F_0}(pc)=u_0(c)\in U_{F_0}$}\\
 &=\omega_{K/F}(\pi_{F}^{-e_{F/\bbQ_p}})\omega_{K/F}(v),\quad\text{where $v=\frac{u(c)}{u_0(c)}\in U_F$}\\
 %&=\omega_{K/F}(\pi_F)^{-e_{F/\bbQ_p}}\cdot\omega_{K/F}(v)\\
 %&=\omega_{K/F}(N_{K/F}(\pi_K))^{-e_{F/\bbQ_p}}\cdot\omega_{K/F}(v)\\
 &=\omega_{K/F}(x)\\
 &=\begin{cases}
                  1 & \text{when $x$ is a square element in $k_{F}^{\times}$}\\
                  -1 & \text{when $x$ is not a square element in $k_{F}^{\times}$},
                 \end{cases}
\end{align*}
where $v=xy$, with $x=x(\omega_{K/F},c)\in U_{F}/U_{F}^{1}$, and $y\in U_{F}^{1}$.

In particular, if we choose $c$ such a way that $u(c)=1$, i.e., $c=\pi_{F}^{-1-d_{F/\bbQ_p}}$, then 
we have 
$\Delta_{K/F}(c')=\Delta_{K/F}(\text{Tr}_{F/F_0}(pc)).$
Since $\text{Tr}_{F/F_0}(pc)\in O_{F_0}$ is a unit and $\Delta_{K/F}=\omega_{K/F}$ induces the quadratic character of 
$k_{F}^{\times}=k_{F_0}^{\times}$, then for this particular choice of $c$ we obtain
\begin{equation*}
 \Delta_{K/F}(c')=\begin{cases}
                   1 & \text{if $\overline{\text{Tr}_{F/F_0}(pc)}$ is a square in $k_{F_0}^{\times}$}\\
                   -1 & \text{if $\overline{\text{Tr}_{F/F_0}(pc)}$ is not a square in $k_{F_0}^{\times}$}.\\
                  \end{cases}
\end{equation*}

\end{proof}

\subsection{\textbf{Computation of $\lambda_{K/F}$, where $K/F$ is a wildly ramified extension}}

In the case $p=2$, the square class group of $F$, i.e., $F^\times/{F^\times}^2$ can be very large 
(cf. Theorem 2.29 on p. 165 of \cite{TYM}), 
so we can have many quadratic characters but they are 
wildly ramified, not tame. In Remark \ref{Remark 1.2}, we mention the current status of 
the this wildly ramified quadratic case.

%\begin{thm}\label{Theorem 3.26}
% Let $F$ be an extension of $\bbQ_2$. Let $K$ be the abelian extension for which $N_{K/F}(K^\times)={F^\times}^2$.
% Then $\lambda_{K/F}=1$.
%\end{thm}
\begin{proof}[{\bf Proof of Theorem \ref{Theorem 3.26}}]

Let $G=\mathrm{Gal}(K/F)$. From Theorem 2.29 on p. 165 of \cite{TYM}, if $F/\bbQ_2$, 
we have $|F^\times/{F^\times}^2|=2^m,\,(m\ge 3)$ and  the $2$-rank of $G$ (i.e., the dimension of $G/G^2$ as a vector 
space over
$\bbF_2$) $\text{rk}_2(G)\ne 1$
 and $G$ is not metacyclic. Then from Bruno Kahn's result, the second Stiefel-Whitney class $s_2(\text{Ind}_{K/F}(1))=0$ 
 (cf. Theorem 1 of \cite{BK}). Since $s_2(\text{Ind}_{K/F}(1))=0$, the Deligne constant $c(\text{Ind}_{K/F}(1))=1$
 (cf. Theorem 3 on p. 129 of \cite{JT1}). Again since here $\text{rk}_2(G)\ne 1$ and $G$ is not metacyclic, 
 we have $\Delta_{K/F}\equiv 1$.
 Therefore, we can conclude that 
 $$\lambda_{K/F}(\psi)=c(\text{Ind}_{K/F}(1))\cdot W(\Delta_{K/F},\psi)=1,$$
 where $\psi$ is a nontrivial additive character of $F$.
\end{proof}

%Let $F=\bbQ_2$, then we have $\bbQ_{2}^{\times}/{\bbQ_{2}^{\times}}^2\cong\bbZ_2\times\bbZ_2\times\bbZ_2$. If $K/\bbQ_2$ is the 
%abelian extension for which $N_{K/\bbQ_2}(K^\times)={\bbQ_{2}^{\times}}^2$, then we have the following lemma.
%\begin{lem}\label{Lemma 3.25}
% Let $K$ be the finite abelian extension of $\bbQ_2$ for which $N_{K/\bbQ_2}(K^\times)={\bbQ_{2}^{\times}}^2$. 
% Then $\lambda_{K/\bbQ_2}=1$.
%\end{lem}
%\begin{proof}
% Let $G=\text{Gal}(K/\bbQ_2)$. We know that $\bbQ_2/{\bbQ_{2}^{\times}}^2\cong\bbZ_2\times\bbZ_2\times\bbZ_2$. 
% Therefore, from class 
% field theory $G\cong\bbZ_2\times\bbZ_2\times\bbZ_2$. So the $2$-rank (i.e., the dimension of $G/G^2$ as a vector space over
% $\bbF_2$) of $G$ is $3$, i.e., $\text{rk}_2(G)=3$, and 
% hence from the equation (4.7) of \cite{SAB1} 
% we have $\Delta_{1}^{G}=1$. Moreover, it is easy to see that $G$ is not metacyclic,
% because $\bbZ_2\times\bbZ_2$
% is not cyclic. So from Theorem 1 of \cite{BK}, and Deligne's result (cf. \cite{JT1}, p. 129, Theorem 3) 
% we have $c_{1}^{G}:=\frac{W(\text{Ind}_{\{1\}}^{G}(1))}{W(\Delta_{1}^{G})}=1$. Then finally we obtain
% \begin{center}
%  $\lambda_{K/\bbQ_2}=\lambda_{1}^{G}=c_{1}^{G}\cdot W(\Delta_{1}^{G})=1$.
% \end{center}
%\end{proof}

\begin{exm}[{\bf Computation of $\lambda_{K/\bbQ_2}$, where $K/\bbQ_2$ is a quadratic extension}]\label{Example wild}

In this case, we have (cf. pp. 60-63 of \cite{SABT}):
$$\lambda_{\bbQ_2(\sqrt{5})/\bbQ_2}=1, \lambda_{\bbQ_2(\sqrt{-1})/\bbQ_2}=i, \lambda_{\bbQ_2(\sqrt{-5})/\bbQ_2}=i,
\lambda_{\bbQ_2(\sqrt{2})/\bbQ_2}=1,$$
$$\lambda_{\bbQ_2(\sqrt{10})/\bbQ_2}=-1, 
\lambda_{\bbQ_2(\sqrt{-2})/\bbQ_2}=i, \lambda_{\bbQ_2(\sqrt{-10})/\bbQ_2}=-i.$$
\end{exm}

%\newpage
\vspace{1cm}

\textbf{Acknowledgements.} I would like to thank Prof. E.-W. Zink, Humboldt University, Berlin
for suggesting this problem and his constant 
valuable advice and comments. I
express my gratitude to the referee for his/her valuable comments and suggestions for the improvement of the paper.

\end{document}